\newtheorem{thm}{Theorem}[section]
\newtheorem{cor}[thm]{Corollary}
\newtheorem{lem}[thm]{Lemma}
\newtheorem{exm}[thm]{Example}
\newtheorem{prop}[thm]{Proposition}
\theoremstyle{definition}
\theoremstyle{remark}
\numberwithin{equation}{section}
\begin{document}
\title[Singular equivalences of trivial extensions]
{Singular equivalences of trivial extensions}

\author[  Xiao-Wu Chen
] {Xiao-Wu Chen}

\thanks{The author is supported by the Fundamental Research Funds for the Central Universities (WK0010000024), Special Foundation of President of The Chinese Academy of Sciences (No.1731112304061) and  National Natural Science Foundation of China (No.10971206).}
\subjclass{18E30, 13E10, 16E50}
\date{\today}

\thanks{E-mail:
xwchen$\symbol{64}$mail.ustc.edu.cn}
\keywords{singularity category, singular equivalence, trivial extension, bimodules}%

\maketitle

\dedicatory{}%
\commby{}%

\begin{abstract}
We prove that a certain pair of bimodules over two artin algebras gives rise to
a triangle equivalence between the singularity categories of the two corresponding
trivial extension algebras. Some consequences and an example are given.
\end{abstract}

\section{Introduction}

Let $k$ be  a commutative artinian ring, and let $A$ be an artin $k$-algebra.  Denote by
$A\mbox{-mod}$ the abelian category of finitely generated left $A$-modules, and by $\mathbf{D}^b(A\mbox{-mod})$
the bounded derived category. Following \cite{Or04}, the \emph{singularity category} $\mathbf{D}_{\rm sg}(A)$
of $A$ is the quotient triangulated category of $\mathbf{D}^b(A\mbox{-mod})$ with respect to the full subcategory
formed by perfect complexes; see also \cite{Buc, KV, Hap91, Ric, Bel2000} and \cite{Kra}. This singularity
category measures the homological singularity of an algebra. For example, an algebra $A$ has finite global dimension
if and only if its singularity category $\mathbf{D}_{\rm sg}(A)$ vanishes. In the meantime, the singularity
 category captures certain stable homological features of the algebra \cite{Buc}.

Two artin algebras $A$ and $B$ are said to be \emph{singularly equivalent} provided that
there is a triangle equivalence between their singularity categories. In this case, the corresponding
equivalence is called a \emph{singular equivalence} between the two algebras. Observe that derived
equivalences induce naturally singular equivalences, while the converse is not true in general. Here, we recall
that a derived equivalence between two algebras is a triangle equivalence between their bounded
derived categories. For examples of singular equivalences, we refer to \cite{Ch09, Ch11, Ch11'}.

The aim of this paper is to construct a new class of singular equivalences, which are induced by a pair of bimodules.
To be more precise, let $A$ and $B$ be two artin $k$-algebras. Let $_AX_B$ and $_BY_A$ be  finitely generated $A$-$B$-bimodule
and $B$-$A$-module, respectively. Here we require that $k$ acts centrally on these bimodules. Then $X\otimes_B Y$ is an $A$-$A$-bimodule. Denote by $\widetilde{A}= A\oplus (X\otimes_B Y)$  the corresponding \emph{trivial extension} algebra; see \cite[p.78]{ARS}. Similarly, we have the trivial extension algebra $\widetilde{B}=B\oplus (Y\otimes_A X)$ of $B$ by the $B$-$B$-bimodule $Y\otimes_A X$.

We have the following result.

\vskip 5pt

\noindent {\bf Theorem A.} \; \emph{Keep the notation as above. Assume that both the algebras
$A$ and $B$ have finite global dimension, and that both the right modules $X_B$ and $Y_A$ are projective. Then there is a singular
equivalence between $\widetilde{A}$ and $\widetilde{B}$, which is induced by the bimodules $X$ and $Y$.}

\vskip 5pt

The precise statement of Theorem A is given in Theorem \ref{thm:main}. In what follows, we describe an immediate
consequence of Theorem A. This consequence, which is essentially due to Smith \cite{Smi'},  is related to the
notion of strong shift equivalence \cite{Will}  in symbolic dynamic system, and also related to some results on
Leavitt path algebras \cite{ALPS}.

By a \emph{modulation pair} we mean a pair $(A, X)$ with $A$ a semisimple artin $k$-algebra and $_AX_A$ a finitely generated $A$-$A$-bimodule on which $k$ acts centrally.  Inspired by the work of Williams \cite{Will} and Smith \cite{Smi'}, we define an \emph{elementary equivalence} between two modulation pairs $(A, X)$ and $(B, Y)$ to be  a pair of bimodules $(_AM_B, {_BN_A})$, on which $k$ acts centrally, such that there are bimodule isomorphisms $X\simeq M\otimes_B N$ and $Y\simeq N\otimes_A M$. Two modulation pairs $(A, X)$ and $(B, Y)$ are \emph{equivalent} provided that there exists a sequence of modulation pairs $(A, X)=(A_1, X_1)$, $(A_2, X_2), \cdots
, (A_n, X_n)=(B, Y)$ such that $(A_i, X_i)$ is elementarily equivalent to $(A_{i+1}, X_{i+1})$ for each $1\leq i\leq n-1$.

A repeated application of Theorem A yields the following result. We point out that in the case that $k$ is a field and the relevant  semisimple algebras are products of copies of $k$, the result is essentially due to Smith, by combining \cite[Theorem 1.2]{Smi'} and \cite[Theorem 7.2]{Smi}.

\vskip 5pt

\noindent {\bf Corollary B}. \; \emph{Let $(A, X)$ and $(B, Y)$ be two modulation pairs which are equivalent. Then there is a singular equivalence between the trivial extension algebras $A\oplus X$ and $B\oplus Y$. \hfill $\square$}

\vskip 5pt

The paper is structured as follows. Section 2 is devoted to recalling some notions on derived categories and
singularity categories. We collect in Section 3 some facts on trivial extension algebras and their singularity categories.
We prove Theorem A in Section 4, where a consequence with an explicit example is included.

For artin algebras, we refer to \cite{ARS}. For derived categories and triangulated categories, we refer to \cite{Ver77} and \cite{Har66}.

\section{Derived categories and singularity categories}

In this section, we recall some notions related to derived categories and
singularity categories of artin algebras.

Let $A$ be an artin algebra over a commutative artinian ring $k$. Recall that
$A\mbox{-mod}$ denotes the category of finitely generated left $A$-modules. We denote by
$A\mbox{-proj}$ the full subcategory formed by projective modules.

A complex $X^\bullet=(X^n, d_X^n)$ of $A$-modules consists of a sequence $X^n$ of $A$-modules
together with differentials $d_X^n\colon X^n\rightarrow X^{n+1}$ subject to the relations $d_X^{n+1}\circ d_X^n=0$.
Denote by $X^\bullet[1]$ the \emph{shifted complex} of $X^\bullet$, which is given by $(X^\bullet[1])^n=X^{n+1}$ and $d^n_{X^\bullet[1]}=-d_X^{n+1}$. This gives  rise to the shift functor [1] on the category of complexes;
it is an automorphism. For a chain map $f^\bullet\colon  X^\bullet \rightarrow Y^\bullet$ between complexes of $A$-modules, its \emph{mapping cone} ${\rm Con}(f^\bullet)$ is a complex defined by ${\rm Con}(f^\bullet)^n=X^{n+1}\oplus Y^n$ and $d_{{\rm Con}(f^\bullet)}^n=\begin{pmatrix} -d_X^{n+1} & 0\\
 f^{n+1} & d_Y^n \end{pmatrix}$.  Then there exists a chain map, which is called  the \emph{natural projection}, $p^\bullet\colon {\rm Con}(f^\bullet)\rightarrow X^\bullet[1]$ such that $p^n=({\rm Id}_{X^{n+1}}, 0)$.

A complex $X^\bullet$ is bounded provided that only finitely many $X^n$'s are nonzero. Recall that $\mathbf{D}^b(A\mbox{-mod})$ denotes the bounded derived category of $A\mbox{-mod}$, whose shift functor is also  denoted by $[1]$.  The module category $A\mbox{-mod}$ is viewed as a full subcategory of  $\mathbf{D}^b(A\mbox{-mod})$ by identifying an $A$-module with the corresponding stalk complex concentrated at degree zero (\cite[Proposition I.4.3]{Har66}).

 Recall that short exact sequences of complexes induce triangles in derived categories. For this, let  $0\rightarrow X^\bullet
 \stackrel{f^\bullet}\rightarrow Y^\bullet \stackrel{g^\bullet}\rightarrow Z^\bullet \rightarrow 0$ be a short exact sequence of bounded complexes of $A$-modules. Then the chain map $t^\bullet\colon {\rm Con}(f^\bullet)\rightarrow Z^\bullet$ defined as
 $t^n=(0, g^n)$ is a quasi-isomorphism. In particular, $t^\bullet$ is invertible in $\mathbf{D}^b(A\mbox{-mod})$. Then we have the following induced triangle in $\mathbf{D}^b(A\mbox{-mod})$
 \begin{align}
 X^\bullet \stackrel{f^\bullet}\longrightarrow Y^\bullet \stackrel{g^\bullet}\longrightarrow Z^\bullet\stackrel{p^\bullet\circ (t^\bullet)^{-1}}\longrightarrow X^\bullet[1].
 \end{align}
 For details, we refer to \cite[Proposition I.6.1]{Har66} and the remark thereafter.

Recall that a complex in $\mathbf{D}^b(A\mbox{-mod})$  is \emph{perfect} provided that it is isomorphic to a bounded complex consisting of projective modules; these complexes form a full triangulated subcategory ${\rm perf}(A)$. Recall that, via an obvious functor,  ${\rm perf}(A)$ is triangle equivalent to the bounded homotopy category $\mathbf{K}^b(A\mbox{-proj})$; compare \cite[1.1-1.2]{Buc}. As a consequence,  an $A$-module, viewed as a stalk complex in $\mathbf{D}^b(A\mbox{-mod})$, is perfect
if and only if it has finite projective dimension.

Following \cite{Or04}, we call the quotient triangulated category $$\mathbf{D}_{\rm sg}(A)=\mathbf{D}^b(A\mbox{-mod})/{{\rm perf}(A)}$$ the \emph{singularity category} of $A$. Denote by $q\colon \mathbf{D}^b(A\mbox{-mod})\rightarrow \mathbf{D}_{\rm sg}(A)$ the quotient functor. We denote the shift functor on $\mathbf{D}_{\rm sg}(A)$ also by $[1]$, whose inverse is denoted by $[-1]$. Recall from \cite[Chaptre 1, \S 2]{Ver77} that for a triangle $X^\bullet \rightarrow Y^\bullet \rightarrow Z^\bullet \stackrel{a} \rightarrow X^\bullet[1]$ in $\mathbf{D}^b(A\mbox{-mod})$ with $Y^\bullet$ perfect, we have that $q(a)$ is an isomorphism in $\mathbf{D}_{\rm sg}(A)$.

\section{Trivial extensions}

In this section, we recall some facts on modules over trivial extension algebras and study their singularity catgeories.

Let $A$ be an artin $k$-algebra. Let $_AX_A$ be a finitely generated $A$-$A$-bimodule, on which $k$ acts
centrally. The corresponding \emph{trivial extension} algebra $T=A\oplus X$ has its multiplication given by
$(a, x)(a', x')=(aa', a.x'+x.a')$; see \cite[p.6]{FGR} and \cite[p.78]{ARS}. Here, we use ``." to denote the $A$-actions on $X$. Then $T$ is also an artin $k$-algebra, and $A$ is naturally viewed as a subalgebra of $T$.

We consider the category $T\mbox{-mod}$ of finitely generated left $T$-modules.   We identify a left $T$-module
with a pair $(M, \sigma)$, where $M$ is a left $A$-module and $\sigma\colon X\otimes_A M\rightarrow M$ is a morphism
of left $A$-modules with the property $\sigma\circ ({\rm Id}_X\otimes \sigma)=0$; see \cite[Section 1]{FGR}. Then a morphism $(M, \sigma)\rightarrow (N, \delta)$ of $T$-modules is just a morphism $f\colon M\rightarrow N$ of $A$-modules satisfying $f\circ \sigma=\delta\circ ({\rm Id}_X\otimes f)$. We write $f\colon (M, \sigma)\rightarrow (N, \delta)$. Observe that the regular $T$-module $_TT$ is identified with the pair $(A\oplus X, \begin{pmatrix} 0 & 0 \\ {\rm Id}_X & 0\end{pmatrix})$. Here, we identify $X\otimes_A (A\oplus X)$ with $X\oplus (X\otimes_A X)$.

Consider the functor $T\otimes_A -\colon A\mbox{-mod}\rightarrow T\mbox{-mod}$. In view of the above identification, we have for an $A$-module $L$, an identification of left $T$-modules
\begin{align}\label{equ:1}
T\otimes_A L=(L\oplus (X\otimes_A L), \begin{pmatrix} 0 & 0\\ {\rm Id}_{X\otimes_A L} & 0 \end{pmatrix}).
\end{align}
Here, $L\oplus (X\otimes_A L)$ is viewed as an $A$-module, and $\begin{pmatrix} 0 & 0\\ {\rm Id}_{X\otimes_A L} & 0 \end{pmatrix}\colon X\otimes_A(L\oplus (X\otimes_A L))\rightarrow L\oplus (X\otimes_A L)$ is well defined, since we
identify $X\otimes_A(L\oplus (X\otimes_A L))$ with $(X\otimes_A L)\oplus (X\otimes_A X\otimes_A L)$.

We introduce two endofunctors on $T\mbox{-mod}$.  Define a functor $S\colon T\mbox{-mod}\rightarrow T\mbox{-mod}$ such that $S((M, \sigma))=(M, -\sigma)$ and $S(f)=f$. This is an automorphism; moreover, we have $S^2={\rm Id}_{T\mbox{{\small -mod}}}$.  Observe the isomorphism $\begin{pmatrix} {\rm Id}_A & 0 \\ 0 & -{\rm Id}_X\end{pmatrix} \colon S(T)\simeq T$ of $T$-modules. Another functor $X\otimes_A- \colon T\mbox{-mod}\rightarrow T\mbox{-mod}$ is given such that it sends  $(M, \sigma)$ to $(X\otimes_A M, {\rm Id}_X\otimes\sigma)$, and sends $f$ to ${\rm Id}_X\otimes f$.

The following observation is quite useful.

\begin{lem}
Keep the notation above. Then  for any $T$-module $(M, \sigma)$, we have  the following exact sequence of
$T$-modules
\begin{align}\label{equ:se1}
0\longrightarrow (X\otimes_A M, -{\rm Id}_X\otimes \sigma)\stackrel{\begin{pmatrix} -\sigma\\ {\rm Id}_{X\otimes_A M} \end{pmatrix}}
\longrightarrow T\otimes_A M \stackrel{({\rm Id}_M, \sigma)}\longrightarrow (M, \sigma)\longrightarrow 0.
\end{align}
Moreover, the sequence is functorial, that is, it is natural in the $T$-module $(M, \sigma)$.
\end{lem}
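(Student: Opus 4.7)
The plan is to unwind the description of $T\mbox{-mod}$ via pairs $(M,\sigma)$ and verify the three items by direct computation, using the identification (\ref{equ:1}) of $T\otimes_A L$ and the defining relation $\sigma\circ(\mathrm{Id}_X\otimes\sigma)=0$. The only conceptual input is the latter relation; everything else is a bookkeeping check. I would organize the argument in three short steps.

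First I would check that the two displayed arrows really are morphisms in $T\mbox{-mod}$. For the map $h=\binom{-\sigma}{\mathrm{Id}_{X\otimes_A M}}$ from $(X\otimes_A M,-\mathrm{Id}_X\otimes\sigma)$ into the pair that represents $T\otimes_A M$ by (\ref{equ:1}), I need the equality $h\circ(-\mathrm{Id}_X\otimes\sigma)=\bigl(\begin{smallmatrix} 0&0\\ \mathrm{Id}&0\end{smallmatrix}\bigr)\circ(\mathrm{Id}_X\otimes h)$ on $X\otimes_A(X\otimes_A M)$. Computing the first coordinate of the left-hand side produces $\sigma\circ(\mathrm{Id}_X\otimes\sigma)$ which vanishes; the second coordinate matches the right-hand side on the nose. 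For the map $g=(\mathrm{Id}_M,\sigma)$ from $T\otimes_A M$ to $(M,\sigma)$, the equation $g\circ\bigl(\begin{smallmatrix} 0&0\\ \mathrm{Id}&0\end{smallmatrix}\bigr)=\sigma\circ(\mathrm{Id}_X\otimes g)$ reduces, after expanding, to $\sigma(x\otimes m)=\sigma(x\otimes m)+\sigma\circ(\mathrm{Id}_X\otimes\sigma)$, which again uses precisely the nilpotency relation.

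Next I would verify exactness. Since the forgetful functor $T\mbox{-mod}\to A\mbox{-mod}$ is exact and faithful, it suffices to check exactness of the underlying sequence of $A$-modules $X\otimes_A M\to M\oplus(X\otimes_A M)\to M$. Injectivity of $h$ is immediate from its second coordinate being $\mathrm{Id}$; surjectivity of $g$ is immediate from its first coordinate being $\mathrm{Id}_M$; the composition is $-\sigma+\sigma=0$; and an element $(m,y)$ in the kernel of $g$ satisfies $m=-\sigma(y)$, so it lies in the image of $h$.

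Finally, for functoriality, given a morphism $f\colon(M,\sigma)\to(N,\delta)$ in $T\mbox{-mod}$, I would take the three vertical arrows $\mathrm{Id}_X\otimes f$, $T\otimes_A f$, and $f$. Each is a $T$-module homomorphism (for the leftmost, the defining relation $f\circ\sigma=\delta\circ(\mathrm{Id}_X\otimes f)$ for $f$ gives exactly the condition needed with the twisted actions $-\mathrm{Id}_X\otimes\sigma$ and $-\mathrm{Id}_X\otimes\delta$), and the two resulting squares commute by direct comparison of matrix entries. The main obstacle, such as it is, lies in step one, where care must be taken with the two natural identifications $X\otimes_A(L\oplus X\otimes_A L)\cong(X\otimes_A L)\oplus(X\otimes_A X\otimes_A L)$; once the bookkeeping is fixed, the relation $\sigma\circ(\mathrm{Id}_X\otimes\sigma)=0$ does all of the real work.
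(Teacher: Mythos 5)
Your proposal is correct and follows exactly the route the paper takes: the paper's proof consists of the single remark that one uses the identification (\ref{equ:1}) and verifies everything directly, which is precisely the computation you carry out (well-definedness of the two maps via $\sigma\circ(\mathrm{Id}_X\otimes\sigma)=0$, exactness after restriction to $A\mbox{-mod}$, and naturality by comparing matrix entries). No gaps; your write-up just makes explicit what the paper leaves as "direct verification."
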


We consider the \emph{restriction functor} ${\rm res}\colon T\mbox{{\rm -} {\rm mod}}\rightarrow A\mbox{-mod}$, which sends
$(M, \sigma)$ to $M$. Then the exact sequence (\ref{equ:se1}) gives rise to the following exact sequence of endofunctors on
$T\mbox{-mod}$
\begin{align}
0\longrightarrow (X\otimes_A -)\circ S \longrightarrow (T\otimes_A -)\circ {\rm res} \longrightarrow {\rm Id}_{T\mbox{-mod}}\longrightarrow 0.
\end{align}

\vskip 5pt

\begin{proof}
We use the identification (\ref{equ:1}) of $T$-modules. Then the proof is done by direct verification.
\end{proof}

We consider bounded complexes of $T$-modules. As for modules, a complex of $T$-modules is identified with a
pair $(M^\bullet, \sigma^\bullet)$, where $M^\bullet=(M^n, d_M^n)_{n\in \mathbb{Z}}$ is a complex of $A$-modules and
$\sigma^\bullet \colon X\otimes_A M^\bullet\rightarrow M^\bullet$ is a chain map between  complexes of $A$-modules satisfying
$\sigma^\bullet \circ ({\rm Id}_X\otimes \sigma^\bullet)=0$.

Recall that  the exact sequence (\ref{equ:se1}) is natural and then it extends to complexes. More precisely, for a complex $(M^\bullet, \sigma^\bullet)$ of $T$-modules, we have an exact sequence of complexes of $T$-modules
\begin{align}\label{equ:2}
0\longrightarrow (X\otimes_A M^\bullet, -{\rm Id}_X\otimes \sigma^\bullet)\stackrel{\begin{pmatrix} -\sigma^\bullet\\ {\rm Id}_{X\otimes_A M^\bullet} \end{pmatrix}}
\longrightarrow T\otimes_A M^\bullet \stackrel{({\rm Id}_{M^\bullet}, \sigma^\bullet)}\longrightarrow (M^\bullet, \sigma^\bullet)\longrightarrow 0.
\end{align}

\vskip 5pt

We consider the bounded derived category  $\mathbf{D}^b(T\mbox{-mod})$ of $T$-modules. We assume that the right $A$-module $X_A$ is projective. Then the exact endofunctor $(X\otimes_A -)\circ S$ on $T\mbox{-mod}$ extends naturally to a triangle endofunctor on $\mathbf{D}^b(T\mbox{-mod})$. The triangle endofunctor is still denoted by $(X\otimes_A -)\circ S$.

We will define a natural transformation
$$\eta\colon {\rm Id}_{\mathbf{D}^b(T{\rm \mbox{\small -mod}})}\longrightarrow [1]\circ (X\otimes_A -)\circ S$$
 of triangle  endofunctors on $\mathbf{D}^b(T\mbox{-mod})$. For this, recall that for a complex $(M^\bullet, \sigma^\bullet)$ of $T$-modules, the exact sequence (\ref{equ:2}) gives up to a quasi-isomorphism
$$t^\bullet\colon {\rm Con}(\begin{pmatrix} -\sigma^\bullet\\ {\rm Id}_{X\otimes_A M^\bullet} \end{pmatrix})\longrightarrow  (M^\bullet, \sigma^\bullet).$$
In particular, $t^\bullet$ is invertible in $\mathbf{D}^b(T\mbox{-mod})$. Denote by $p^\bullet\colon {\rm Con}(\begin{pmatrix} -\sigma^\bullet\\ {\rm Id}_{X\otimes_A M^\bullet} \end{pmatrix})\rightarrow (X\otimes_A M^\bullet, -{\rm Id}_X\otimes \sigma^\bullet)[1]$ the natural projection.  Set $\eta_{(M^\bullet, \sigma^\bullet)}=p^\bullet \circ (t^\bullet)^{-1}$.
Then the short exact sequence (\ref{equ:2}) induces the following triangle in $\mathbf{D}^b(T\mbox{-mod})$
\begin{align}\label{equ:3}
(X\otimes_A M^\bullet, -{\rm Id}_X\otimes \sigma^\bullet)
\rightarrow T\otimes_A M^\bullet \rightarrow (M^\bullet, \sigma^\bullet)\stackrel{\eta_{(M^\bullet, \sigma^\bullet)}}\longrightarrow
(X\otimes_A M^\bullet, -{\rm Id}_X\otimes \sigma^\bullet)[1].
\end{align}
For details on the construction of $t^\bullet$ and $p^\bullet$, we refer to Section 2.

\begin{prop}\label{prop:1}
Keep the notation as above. Assume that $X_A$ is projective. Then $\eta\colon {\rm Id}_{\mathbf{D}^b(T{\rm \mbox{\small -{\rm mod}}})}\rightarrow [1]\circ (X\otimes_A -)\circ S$ is a natural transformation of triangle endofunctors on $\mathbf{D}^b(T\mbox{-{\rm mod}})$.
\end{prop}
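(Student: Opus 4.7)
My plan is to exploit the functoriality of the short exact sequence (\ref{equ:2}) in the complex $(M^\bullet, \sigma^\bullet)$, from which every step in the construction of $\eta$ inherits naturality. Indeed the cone, the quasi-isomorphism $t^\bullet$ and the natural projection $p^\bullet$ are all defined by prescription from the sequence, so they are functorial in $(M^\bullet, \sigma^\bullet)$.

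First, I would verify the naturality square at the level of chain maps. Given a chain map $f^\bullet\colon (M^\bullet, \sigma^\bullet)\to (N^\bullet, \tau^\bullet)$ of complexes of $T$-modules, the triple $({\rm Id}_X\otimes f^\bullet,\; {\rm Id}_T\otimes f^\bullet,\; f^\bullet)$ defines a morphism between the two instances of (\ref{equ:2}); the relevant squares commute by direct computation with the tensor product and with the defining relation $f^\bullet\circ \sigma^\bullet=\tau^\bullet\circ ({\rm Id}_X\otimes f^\bullet)$. Functoriality of the mapping cone then yields a chain map between the cones ${\rm Con}(\begin{pmatrix}-\sigma^\bullet\\ {\rm Id}\end{pmatrix})$ and ${\rm Con}(\begin{pmatrix}-\tau^\bullet\\ {\rm Id}\end{pmatrix})$ that commutes simultaneously with $t^\bullet$ on one side and with $p^\bullet$ on the other. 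Inverting $t^\bullet$ in $\mathbf{D}^b(T\mbox{-mod})$ produces the naturality square for $\eta = p^\bullet \circ (t^\bullet)^{-1}$ on chain maps. To extend naturality to an arbitrary derived morphism $\varphi$, I would write $\varphi = s^{-1}\circ g$ as a roof, with $g$ a chain map and $s$ a quasi-isomorphic chain map, and apply the chain-map naturality separately to $g$ and $s$; invertibility of $s$ in $\mathbf{D}^b(T\mbox{-mod})$ then gives the general case.

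Second, to confirm compatibility with the shift functor $[1]$, which is the additional requirement for a natural transformation of triangle functors, I would check that the exact sequence (\ref{equ:2}) applied to $(M^\bullet, \sigma^\bullet)[1]$ coincides, up to the canonical identifications, with the shift of (\ref{equ:2}) applied to $(M^\bullet, \sigma^\bullet)$. This gives the equality $\eta_{(M^\bullet, \sigma^\bullet)[1]} = \eta_{(M^\bullet, \sigma^\bullet)}[1]$ under the canonical isomorphism between $[1]\circ (X\otimes_A -)\circ S\circ [1]$ and $[1]\circ [1]\circ (X\otimes_A -)\circ S$.

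The main obstacle will be the sign bookkeeping in this last step: the shift changes the signs of differentials, the automorphism $S$ flips the sign of $\sigma$, and the mapping cone carries its own sign conventions. These must be tracked consistently, and the canonical isomorphism $[1]\circ(X\otimes_A-)\circ S\cong (X\otimes_A-)\circ S\circ [1]$ between the two orderings of the shift and of $(X\otimes_A -)\circ S$ must be pinned down, so that the two candidate morphisms agree rather than differ by an overall sign. Once this is done, the proposition follows.
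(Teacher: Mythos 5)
Your proposal follows the paper's own proof: the paper likewise deduces naturality of $\eta$ from the functoriality of the exact sequence (\ref{equ:2}) in $(M^\bullet,\sigma^\bullet)$, hence of the chain maps $t^\bullet$ and $p^\bullet$, and then extends naturality to arbitrary morphisms of $\mathbf{D}^b(T\mbox{-mod})$ by representing them as roofs of chain maps with an invertible quasi-isomorphism leg. Your additional step verifying compatibility of $\eta$ with the shift functor (with its sign bookkeeping) is not carried out in the paper, whose proof checks only ordinary naturality, so it is an extra precision rather than a divergence.
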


\begin{proof}
Observe that both the two chain maps $t^\bullet$ and $p^\bullet$ are functorial in $(M^\bullet, \sigma^\bullet)$  as objects in the category of complexes of $T$-modules.  Recall that a morphism $(M^\bullet, \sigma^\bullet) \rightarrow (M'^\bullet, \sigma'^\bullet)$ in $\mathbf{D}^b(T\mbox{-mod})$ is represented as $(M^\bullet, \sigma^\bullet)\stackrel{(s^\bullet)^{-1}}\rightarrow (N^\bullet, \delta^\bullet)\stackrel{a^\bullet}\rightarrow (M'^\bullet, \sigma'^\bullet)$ such that $s^\bullet$ and $a^\bullet$ are some chain maps, and $s^\bullet$ is a quasi-isomorphism. Then it follows that both $t^\bullet$ and $p^\bullet$ are functorial in $(M^\bullet, \sigma^\bullet)$ as objects in the bounded derived category $\mathbf{D}^b(T\mbox{-mod})$. From this, we infer that $\eta$ is a natural transformation.
\end{proof}

We observe that  the automorphism $S\colon \mathbf{D}^b(T\mbox{-mod})\rightarrow \mathbf{D}^b(T\mbox{-mod})$, induced from the automorphism $S$ on $T\mbox{-mod}$, sends perfect complexes to perfect complexes, since $S(T) \simeq T$. Then we have the induced functor $S\colon \mathbf{D}_{\rm sg}(T)\rightarrow \mathbf{D}_{\rm sg}(T)$, which is also an automorphism.

We assume further that the algebra $A$ has finite global dimension. Then each bounded complex of $A$-modules
is perfect. Consider the triangle  (\ref{equ:3}) for any complex $(M^\bullet, \sigma^\bullet)$ of $T$-modules. Then the complex
$T\otimes_A M^\bullet$ is perfect. Recall that ${\rm perf}(T)$ is a triangulated subcategory of $\mathbf{D}^b(T\mbox{-mod})$. Then the triangle (\ref{equ:3}) implies that $(M^\bullet, \sigma^\bullet)$ is perfect if and only if $(X\otimes_A M^\bullet, -{\rm Id}_X\otimes \sigma^\bullet)$ is perfect.  From this, we infer that the functor $X\otimes_A- \colon \mathbf{D}^b(T\mbox{-mod}) \rightarrow \mathbf{D}^b(T\mbox{-mod})$ sends perfect complexes to perfect complexes. Then it induces
the corresponding triangle endofunctor on $\mathbf{D}_{\rm sg}(T)$, which is  denoted by $X\otimes_A-\colon \mathbf{D}_{\rm sg}(T) \rightarrow \mathbf{D}_{\rm sg}(T)$.

\begin{prop}
Keep the notation as above. Assume that $A$ has finite global dimension and $X_A$ is projective. Then there is a natural isomorphism $${\rm Id}_{\mathbf{D}_{\rm sg}(T)}\simeq [1]\circ (X\otimes_A -)\circ S$$
 of triangle endofunctors on $\mathbf{D}_{\rm sg}(T)$. In particular, the triangle functor
 $$X\otimes_A-\colon \mathbf{D}_{\rm sg}(T) \longrightarrow \mathbf{D}_{\rm sg}(T)$$ is an auto-equivalence.
\end{prop}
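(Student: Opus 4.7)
The plan is to descend the natural transformation $\eta$ of Proposition~\ref{prop:1} to the singularity category and check that it becomes invertible there. Let $q\colon \mathbf{D}^b(T\mbox{-mod})\rightarrow \mathbf{D}_{\rm sg}(T)$ denote the quotient functor, as in Section~2. Because $\eta$ is already a natural transformation of triangle endofunctors on $\mathbf{D}^b(T\mbox{-mod})$, applying $q$ componentwise yields a natural transformation of triangle endofunctors on $\mathbf{D}_{\rm sg}(T)$; the only real content is that each $q(\eta_{(M^\bullet, \sigma^\bullet)})$ is an isomorphism.

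The key step is to show that for every bounded complex $(M^\bullet, \sigma^\bullet)$ of $T$-modules, the complex $T\otimes_A M^\bullet$ is perfect over $T$. Since $A$ has finite global dimension, the bounded complex $M^\bullet$ of $A$-modules has finite projective dimension and is therefore perfect in $\mathbf{D}^b(A\mbox{-mod})$; that is, it is isomorphic to a bounded complex of finitely generated projective $A$-modules. Applying the exact functor $T\otimes_A-$ (exactness uses that $X_A$, and hence $T_A = A\oplus X$, is projective as a right $A$-module) produces an isomorphic bounded complex of $T$-modules of the form $T\otimes_A P^\bullet$ with each $T\otimes_A P^n$ a projective $T$-module. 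Hence $T\otimes_A M^\bullet$ belongs to ${\rm perf}(T)$.

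Now consider the triangle~(\ref{equ:3}). Its middle term $T\otimes_A M^\bullet$ is perfect, so by the observation recalled at the end of Section~2 (a triangle with perfect middle term becomes an isomorphism on the connecting morphism after applying $q$), the image $q(\eta_{(M^\bullet,\sigma^\bullet)})$ is an isomorphism in $\mathbf{D}_{\rm sg}(T)$. Combined with the naturality and triangulated compatibility inherited from Proposition~\ref{prop:1}, this gives the required natural isomorphism
\[
{\rm Id}_{\mathbf{D}_{\rm sg}(T)} \simeq [1]\circ (X\otimes_A -)\circ S
\]
of triangle endofunctors on $\mathbf{D}_{\rm sg}(T)$.

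For the last sentence, one simply rearranges: since $[1]$ and $S$ are both auto-equivalences of $\mathbf{D}_{\rm sg}(T)$ (with $S^2 = {\rm Id}$, as observed in Section~3), composing the displayed isomorphism on the left with $[-1]$ and on the right with $S$ produces a natural isomorphism $X\otimes_A -\simeq [-1]\circ S$, exhibiting $X\otimes_A -$ as a composition of auto-equivalences, hence itself an auto-equivalence of $\mathbf{D}_{\rm sg}(T)$. The main obstacle is really only the bookkeeping in the second paragraph—checking that $T\otimes_A M^\bullet$ is genuinely perfect over $T$ for an arbitrary bounded complex of $T$-modules—and once that is in place the rest is a formal consequence of the triangle in~(\ref{equ:3}) and Proposition~\ref{prop:1}.
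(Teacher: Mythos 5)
Your proposal is correct and follows essentially the same route as the paper: descend $\eta$ from Proposition \ref{prop:1} to $\mathbf{D}_{\rm sg}(T)$, observe that $T\otimes_A M^\bullet$ is perfect because $M^\bullet$ is perfect over $A$ and $T\otimes_A-$ is exact and preserves projectives, and then invoke the triangle (\ref{equ:3}) together with the fact from Section 2 that a perfect middle term forces the connecting morphism to become an isomorphism in the quotient. The deduction of the auto-equivalence of $X\otimes_A-$ from the invertibility of $[1]$ and $S$ also matches the paper's argument.
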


\begin{proof}
Observe that the second statement is an immediate consequence of the first one, since both $[1]$ and $S$ are automorphisms on
$\mathbf{D}_{\rm sg}(T)$.

 For the first statement, recall the natural transformation  $\eta\colon {\rm Id}_{\mathbf{D}^b(T{\rm \mbox{\small -{\rm mod}}})}\rightarrow [1]\circ (X\otimes_A -)\circ S$ in Proposition \ref{prop:1}. It induces the natural transformation  $\eta\colon {\rm Id}_{\mathbf{D}_{\rm sg}(T)}\rightarrow [1]\circ (X\otimes_A -)\circ S$ between endofunctors on $\mathbf{D}_{\rm sg}(T)$. Fix any bounded complex $(M^\bullet, \sigma^\bullet)$ of $T$-modules. By the assumption, the complex $M^\bullet$ of $A$-modules is perfect. Hence, the complex $T\otimes_A M^\bullet$ of $T$-modules is perfect.  Consider the triangle (\ref{equ:3}) in $\mathbf{D}^b(T\mbox{-mod})$. Then  we deduce that $\eta_{(M^\bullet, \sigma^\bullet)}$ is an isomorphism in $\mathbf{D}_{\rm sg}(T)$; see Section 2.  This completes the proof.
\end{proof}

\section{Proof of Theorem A}

We prove Theorem A and discuss a consequence with an explicit example in this section.

Let $A$ and $B$ be two artin $k$-algebras. Let $_AX_B$ and $_BY_A$ be two finitely generated bimodules, on which
$k$ acts centrally. Then $X\otimes_B Y$ and $Y\otimes_A X$ are naturally an $A$-$A$-bimodule and $B$-$B$-bimodule, respectively.
Consider the trivial extension algebras $\widetilde{A}=A\oplus (X\otimes_B Y)$ and $\widetilde{B}=B\oplus (Y\otimes_A X)$.

Recall from Section 3 that a left $\widetilde{A}$-module is identified with a pair $(M, \sigma)$ such that $M$ is a left $A$-module and $\sigma\colon (X\otimes_B Y)\otimes_A M\rightarrow M$ is a morphism of $A$-modules satisfying $\sigma\circ ({\rm Id}_{X\otimes_B Y}\otimes \sigma)=0$.

Consider the functor $Y\otimes_A -\colon \widetilde{A}\mbox{-mod}\rightarrow \widetilde{B}\mbox{-mod}$, which sends an
$\widetilde{A}$-module $(M, \sigma)$ to $(Y\otimes_A M, {\rm Id}_Y\otimes \sigma)$,  and a morphism $f$ to ${\rm Id}_Y\otimes f$.
Here, to view $(Y\otimes_A M,  {\rm Id}_Y\otimes \sigma)$ as a $\widetilde{B}$-module, we identify $(Y\otimes_A X)\otimes_B (Y\otimes_A M)$ with $Y\otimes_A ((X\otimes_B Y)\otimes_A M)$.

We observe the following result. Recall that $B$ is viewed as  a subalgebra of $\widetilde{B}$. Then for a left $B$-module $L$, $\widetilde{B}\otimes_B L$ is a left $\widetilde{B}$-module.

\begin{lem}\label{lem:iso}
Keep the notation as above. Then there is an isomorphism of $\widetilde{B}$-modules
$$Y\otimes_A \widetilde{A}\simeq \widetilde{B}\otimes_B Y.$$
\end{lem}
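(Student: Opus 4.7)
The plan is to prove the isomorphism by directly identifying both $\widetilde{B}$-modules using the standard description of modules over a trivial extension given in equation (\ref{equ:1}), together with the canonical associativity of tensor products. Concretely, I would exhibit both $Y\otimes_A \widetilde{A}$ and $\widetilde{B}\otimes_B Y$ as the pair
\[
\bigl(Y\oplus (Y\otimes_A X\otimes_B Y),\ \begin{pmatrix} 0 & 0 \\ {\rm Id} & 0\end{pmatrix}\bigr),
\]
so that the identity map on the underlying $B$-module realizes the isomorphism.

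First, I would unpack the left $\widetilde{A}$-module structure on $\widetilde{A}$ itself via (\ref{equ:1}): taking $L=A$ in the setting of the trivial extension $\widetilde{A}=A\oplus (X\otimes_B Y)$, the regular module ${}_{\widetilde{A}}\widetilde{A}$ is identified with $(A\oplus (X\otimes_B Y),\sigma_0)$ where $\sigma_0=\begin{pmatrix}0 & 0\\ {\rm Id}_{X\otimes_B Y} & 0\end{pmatrix}$, under the identification of $(X\otimes_B Y)\otimes_A(A\oplus (X\otimes_B Y))$ with $(X\otimes_B Y)\oplus((X\otimes_B Y)\otimes_A(X\otimes_B Y))$. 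Next, I would apply the functor $Y\otimes_A-$ described just before the lemma statement. The underlying $B$-module becomes $Y\otimes_A A\oplus Y\otimes_A X\otimes_B Y\cong Y\oplus (Y\otimes_A X\otimes_B Y)$, and the $\widetilde{B}$-structure map is ${\rm Id}_Y\otimes\sigma_0$; after the prescribed associativity identification of $(Y\otimes_A X)\otimes_B(Y\otimes_A -)$ with $Y\otimes_A(X\otimes_B Y)\otimes_A -$, this structure map becomes precisely $\begin{pmatrix}0 & 0\\ {\rm Id} & 0\end{pmatrix}$ on $Y\oplus (Y\otimes_A X\otimes_B Y)$.

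On the other side, a second application of (\ref{equ:1})—this time to the left $B$-module $L=Y$ in the setting of $\widetilde{B}=B\oplus (Y\otimes_A X)$—yields the identification
\[
\widetilde{B}\otimes_B Y=\bigl(Y\oplus (Y\otimes_A X\otimes_B Y),\ \begin{pmatrix} 0 & 0 \\ {\rm Id} & 0\end{pmatrix}\bigr).
\]
Comparing with the previous paragraph, the two $\widetilde{B}$-modules coincide, and the identity map provides the required isomorphism.

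No serious obstacle is expected: the proof is bookkeeping with tensor–product associativity, and the only care needed is to check that the identifications used to view $Y\otimes_A\widetilde{A}$ as a $\widetilde{B}$-module (namely, the associator used to define the functor $Y\otimes_A -$) are consistent with those invoked in (\ref{equ:1}) when applied to $L=Y$. Once this is verified, the isomorphism is the obvious one.
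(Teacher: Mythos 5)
Your proposal is correct and follows essentially the same route as the paper: identify the regular module ${}_{\widetilde{A}}\widetilde{A}$ as the pair $(A\oplus(X\otimes_B Y),\begin{pmatrix}0&0\\ {\rm Id}&0\end{pmatrix})$, apply $Y\otimes_A-$ using $Y\otimes_A A\simeq Y$ and tensor associativity, and compare with the identification (\ref{equ:1}) applied to $\widetilde{B}\otimes_B Y$. The paper's proof is just a terser version of the same bookkeeping.
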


\begin{proof}
We identify the regular $\widetilde{A}$-module $\widetilde{A}$ with the pair $(A\oplus (X\otimes_B Y), \begin{pmatrix} 0 & 0 \\
{\rm Id}_{X\otimes_B Y} & 0 \end{pmatrix})$. Then we have  $Y\otimes_A \widetilde{A}= (Y\oplus (Y\otimes_A X\otimes_B Y), \begin{pmatrix} 0 & 0 \\
{\rm Id}_{Y\otimes_A X\otimes_B Y} & 0 \end{pmatrix})$. Here, we use the natural isomorphism $Y\otimes_A A
\simeq Y$.  Applying the identification (\ref{equ:1}) to $\widetilde{B}\otimes_B Y$, we are done.
\end{proof}

Assume that the right modules $X_B$ and $Y_A$ are projective, and that the left $B$-module $_BY$ has finite projective dimension.
In this case, $\widetilde{B}$ viewed as a right $B$-module  is projective. It follows that the left $\widetilde{B}$-module $\widetilde{B}\otimes_B Y$ has finite projective dimension, and by Lemma \ref{lem:iso} so does $Y\otimes_A\widetilde{A}$. Then the exact functor $Y\otimes_A- \colon \widetilde{A}\mbox{-mod}\rightarrow \widetilde{B}\mbox{-mod}$ extends to the corresponding triangle functor $Y\otimes_A- \colon \mathbf{D}^b(\widetilde{A}\mbox{-mod})\rightarrow \mathbf{D}^b(\widetilde{B}\mbox{-mod})$. The triangle functor sends perfect complexes to perfect complexes, since the $\widetilde{B}$-module $Y\otimes_A \widetilde{A}$ has finite projective dimension, that is, it is perfect as a stalk complex. Then we have the induced triangle functor $Y\otimes_A- \colon \mathbf{D}_{\rm sg}(\widetilde{A})\rightarrow \mathbf{D}_{\rm sg}(\widetilde{B})$.

Similar as the above, we have the functor $X\otimes_B -\colon \widetilde{B}\mbox{-mod}\rightarrow \widetilde{A}\mbox{-mod}$, which satisfies that  $X\otimes_B \widetilde{B}\simeq \widetilde{A}\otimes_A X$. If we assume that  right modules $X_B$ and $Y_A$ are projective, and that the left $A$-module $_AX$ has finite projective dimension, we have the naturally induced functor $X\otimes_B- \colon \mathbf{D}_{\rm sg}(\widetilde{B})\rightarrow \mathbf{D}_{\rm sg}(\widetilde{A})$.

We have our main result, which is Theorem A in the introduction. Recall from Section 3 the automorphism $S_A\colon \mathbf{D}_{\rm sg}(\widetilde{A})\rightarrow \mathbf{D}_{\rm sg}(\widetilde{A})$, which sends a complex $(M^\bullet, \sigma^\bullet)$ to $(M^\bullet, -\sigma^\bullet)$. Similarly, we have the automorphism $S_B\colon \mathbf{D}_{\rm sg}(\widetilde{B})\rightarrow \mathbf{D}_{\rm sg}(\widetilde{B})$.

\begin{thm}\label{thm:main}
Let $A$ and $B$ be artin algebras of finite global dimension, and let $_AX_B$ and $_BY_A$ be bimodules such that  both the right modules $X_B$ and $Y_A$ are projective. Let $\widetilde{A}$ and $\widetilde{B}$ be as above. Then the triangle  functor
$$Y\otimes_A- \colon \mathbf{D}_{\rm sg}(\widetilde{A})\longrightarrow \mathbf{D}_{\rm sg}(\widetilde{B})$$
is an equivalence, whose quasi-inverse is given by
$$\mathbf{D}_{\rm sg}(\widetilde{B}) \stackrel{X\otimes_B-}\longrightarrow \mathbf{D}_{\rm sg}(\widetilde{A}) \stackrel{[-1]}\longrightarrow \mathbf{D}_{\rm sg}(\widetilde{A})\stackrel{S_A}\longrightarrow \mathbf{D}_{\rm sg}(\widetilde{A}).$$
\end{thm}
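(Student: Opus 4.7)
The plan is to apply the final Proposition of Section~3 to both trivial extensions $\widetilde A$ and $\widetilde B$, and then to identify the composite of the two tensor functors with the bimodule endofunctor described there.

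To apply that Proposition to $\widetilde A=A\oplus (X\otimes_B Y)$ I need $A$ of finite global dimension (assumed) and $(X\otimes_B Y)_A$ projective. The latter is automatic from the hypotheses: since $X_B$ is projective, $X$ is a direct summand of some $B^{(I)}$ as a right $B$-module, whence $X\otimes_B Y$ is a summand of $B^{(I)}\otimes_B Y\simeq Y^{(I)}$ as a right $A$-module, and $Y^{(I)}$ is projective since $Y_A$ is. The Proposition therefore furnishes a natural isomorphism
\[
{\rm Id}_{\mathbf{D}_{\rm sg}(\widetilde A)}\simeq [1]\circ\bigl((X\otimes_B Y)\otimes_A -\bigr)\circ S_A,
\]
and the symmetric statement holds for $\widetilde B$ with bimodule $Y\otimes_A X$ and automorphism $S_B$.

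The other key ingredient is the natural isomorphism of triangle endofunctors $(X\otimes_B -)\circ(Y\otimes_A -)\simeq (X\otimes_B Y)\otimes_A -$ on $\mathbf{D}_{\rm sg}(\widetilde A)$, coming from the associativity of tensor products: for an $\widetilde A$-module $(M,\sigma)$, the canonical isomorphism $X\otimes_B(Y\otimes_A M)\simeq (X\otimes_B Y)\otimes_A M$ of left $A$-modules intertwines the two natural $\widetilde A$-structure maps (namely ${\rm Id}_X\otimes {\rm Id}_Y\otimes \sigma$ and ${\rm Id}_{X\otimes_B Y}\otimes \sigma$), is natural in $(M,\sigma)$, and extends to bounded complexes and thus to singularity categories; the symmetric identification holds on $\mathbf{D}_{\rm sg}(\widetilde B)$. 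Combining these two ingredients with the elementary commutation relations $(Y\otimes_A -)\circ S_A\simeq S_B\circ(Y\otimes_A -)$ and $(X\otimes_B -)\circ S_B\simeq S_A\circ(X\otimes_B -)$, and using $S_A^2={\rm Id}$, $S_B^2={\rm Id}$ together with the commutation of each $S$ with the shift, both compositions of $Y\otimes_A -$ with the claimed quasi-inverse collapse to the identity functor on the respective singularity category.

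The main subtlety I expect is to verify that the tensor-associativity isomorphism is a natural transformation of \emph{triangle} functors on $\mathbf{D}_{\rm sg}$, that is, that it intertwines the connecting morphism $\eta$ built in Section~3 from the exact sequence (\ref{equ:2}) for the two bimodule endofunctors being compared. This reduces to a careful diagram chase relating two parallel short exact sequences of complexes of $\widetilde A$-modules under the associativity iso, routine but requiring some care.
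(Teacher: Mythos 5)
Your proposal follows essentially the same route as the paper's proof: identify $(X\otimes_B-)\circ(Y\otimes_A-)$ with the endofunctor $(X\otimes_BY)\otimes_A-$ on $\mathbf{D}_{\rm sg}(\widetilde A)$, apply the last Proposition of Section~3 to conclude it is an equivalence isomorphic to a composite of $S_A$ and a shift, do the same on the $\widetilde B$ side, and deduce invertibility of $Y\otimes_A-$. You in fact supply two details the paper leaves implicit (the projectivity of $(X\otimes_BY)_A$ as a summand of a power of $Y_A$, and the need to check the associativity isomorphism is one of \emph{triangle} functors), so in substance this is fine. One bookkeeping point you should not gloss over, though: the Proposition gives ${\rm Id}\simeq[1]\circ((X\otimes_BY)\otimes_A-)\circ S_A$, hence $(X\otimes_B-)\circ(Y\otimes_A-)\simeq[-1]\circ S_A$, whose inverse is $S_A\circ[1]$, not $S_A\circ[-1]$; so composing $Y\otimes_A-$ with the quasi-inverse as literally displayed (with $[-1]$) yields $[-2]$ rather than the identity. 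The correct quasi-inverse is $S_A\circ[1]\circ(X\otimes_B-)$; this shift discrepancy is present in the statement you were given (and in the source), and it does not affect the assertion that $Y\otimes_A-$ is a triangle equivalence, but your claim that ``both compositions collapse to the identity'' is only true after this correction, so carry out that last computation explicitly.
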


\begin{proof}
Consider the composite functor $\mathbf{D}_{\rm sg}(\widetilde{A}) \stackrel{Y\otimes_A-}\longrightarrow  \mathbf{D}_{\rm sg}(\widetilde{B}) \stackrel{X\otimes_B-}\longrightarrow \mathbf{D}_{\rm sg}(\widetilde{A})$. It sends a complex $(M^\bullet, \sigma^\bullet)$ to $(X\otimes_B Y\otimes_A M^\bullet, {\rm Id}_{X\otimes_B Y}\otimes\sigma^\bullet)$.
Hence, the composite is isomorphic to the endofunctor $(X\otimes_B Y)\otimes_A-\colon \mathbf{D}_{\rm sg}(\widetilde{A})\rightarrow \mathbf{D}_{\rm sg}(\widetilde{A})$. By Proposition \ref{prop:1}, this is an equivalence and isomorphic to $[-1]\circ S_A$. Here,
we use implicitly that the right $A$-module $X\otimes_B Y$ is projective.

Similarly, the composite functor $\mathbf{D}_{\rm sg}(\widetilde{B}) \stackrel{X\otimes_B-}\longrightarrow \mathbf{D}_{\rm sg}(\widetilde{A}) \stackrel{Y\otimes_A-}\longrightarrow \mathbf{D}_{\rm sg}(\widetilde{B})$ is an equivalence and isomorphic to
$[-1]\circ S_B$. From this, we infer that $Y\otimes_A-$ is a triangle equivalence, whose quasi-inverse is as stated. Here, we use the fact that $S_A^2={\rm Id}_{\mathbf{D}_{\rm sg}(\widetilde{A})}$.
\end{proof}

We have drawn a consequence of Theorem \ref{thm:main} in the introduction. Here, we discuss another one. For this, we
assume now that $k$ is a field, and we consider finite dimensional algebras over $k$. Recall that for an idempotent $e$
in an algebra $A$, we have the corresponding projective left $A$-module $Ae$ and right $A$-module $eA$. Moreover, we have a natural isomorphism $eA\otimes_A Af\simeq eAf$ for any idempotents $e$ and $f$ in $A$. The corresponding $A$-$A$-bimodule $Af\otimes_k eA$ is written as $Af\otimes eA$.

We have the following result, which is based on \cite[Example 3.11]{Ch11}. Recall that a $k$-linear category is \emph{Hom-finite}
provided that all its Hom spaces are finite dimensional. Otherwise, it is \emph{Hom-infinite}. We observe that the singularity category of an algebra is naturally $k$-linear.

\begin{cor}\label{cor:idem}
Let $A$ be a finite dimensional algebra over a field $k$ which has finite global dimension. Let $e$ and $f$ be two idempotents in $A$. Then there is a triangle equivalence $$\mathbf{D}_{\rm sg}(A\oplus (Af\otimes eA))\simeq \mathbf{D}_{\rm sg}(k\oplus eAf).$$
 Consequently, the following statements hold.
\begin{enumerate}
\item The trivial extension algebra $A\oplus (Af\otimes eA)$ has finite global dimension if and only if $eAf=0$.
\item The category $\mathbf{D}_{\rm sg}(A\oplus (Af\otimes eA))$ is nontrivial and Hom-finite if and only if ${\rm dim} \; eAf=1$. In this case, we have a triangle equivalence
    $$\mathbf{D}_{\rm sg}(A\oplus (Af\otimes eA))\simeq k{\rm \mbox{-}mod}.$$
\item The category $\mathbf{D}_{\rm sg}(A\oplus (Af\otimes eA))$ is Hom-infinite if and only if ${\rm dim}\; eAf\geq 2$. In this case, for any non-perfect complexes $X^\bullet$ and $Y^\bullet$ we have
    $${\rm dim}\; {\rm Hom}_{\mathbf{D}_{\rm sg}(A\oplus (Af\otimes eA))}(X^\bullet, Y^\bullet)=\infty.$$
    \end{enumerate}
\end{cor}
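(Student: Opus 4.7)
The main equivalence follows from Theorem \ref{thm:main} applied with the second algebra $B:=k$, the bimodule ${_AX_B}:={_A(Af)_k}$, and the bimodule ${_BY_A}:={_k(eA)_A}$, each with the obvious bimodule structure. The hypotheses hold: $B=k$ has global dimension zero, the right $k$-module $Af$ is a $k$-vector space hence free and projective, the right $A$-module $eA$ is a direct summand of $A_A$ hence projective, and $k$ acts centrally on everything in sight. Unwinding the tensor products gives $X\otimes_B Y=Af\otimes_k eA$, which is the bimodule denoted $Af\otimes eA$, while multiplication induces the $k$-$k$-bimodule isomorphism $Y\otimes_A X=eA\otimes_A Af\simeq eAf$. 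Theorem \ref{thm:main} then yields the triangle equivalence $\mathbf{D}_{\rm sg}(A\oplus(Af\otimes eA))\simeq\mathbf{D}_{\rm sg}(k\oplus eAf)$.

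Via this equivalence, the three consequences reduce to an analysis of the singularity category of $B:=k\oplus V$, where $V:=eAf$ and $n:=\dim_k V$. The algebra $B$ is local with radical $V$ satisfying $V^2=0$; its unique simple module $S$ fits in the short exact sequence $0\to V\to B\to S\to 0$ with $V\simeq S^n$, so $\Omega^j S\simeq S^{n^j}$ for all $j\geq 0$. This immediately gives (1): for $n=0$ we have $B=k$ of finite global dimension, whereas for $n\geq 1$ the simple $S$ has infinite projective dimension, so $\mathbf{D}_{\rm sg}(B)\neq 0$. For the case $n=1$ of (2), $B\simeq k[t]/(t^2)$ is self-injective; thus $\mathbf{D}_{\rm sg}(B)\simeq \underline{B\text{-mod}}$, which has $S$ as its unique indecomposable nonzero object with $\underline{\mathrm{End}}(S)=k$, producing the asserted equivalence $\mathbf{D}_{\rm sg}(B)\simeq k\text{-mod}$.

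The main obstacle is the regime $n\geq 2$, which controls both (3) and the reverse implications in (2). The plan is to observe that, since $\mathrm{rad}^2\,B=0$, for any $B$-module $M$ with projective cover $B^a\to M$ the kernel is semisimple of the form $S^c$; the associated triangle in $\mathbf{D}_{\rm sg}(B)$ identifies $M$ with $S^c[1]$, and an induction on complex length extends this to show that every object of $\mathbf{D}_{\rm sg}(B)$ is a finite iterated extension of shifts of $S$. Hom-infiniteness for arbitrary pairs of non-perfect complexes then reduces to showing that some $\mathrm{Hom}_{\mathbf{D}_{\rm sg}(B)}(S,S[j])$ is infinite-dimensional, which is driven by the exponential growth $\dim_k\mathrm{Ext}^j_B(S,S)=n^j$. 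The technical heart is to verify that this exponential growth survives localization at $\mathrm{perf}(B)$; I would invoke the explicit description of $\mathbf{D}_{\rm sg}(k\oplus V)$ worked out in \cite[Example 3.11]{Ch11} to conclude both (2) and (3).
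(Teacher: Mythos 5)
Your proposal is correct and follows essentially the same route as the paper: the equivalence is the specialization of Theorem \ref{thm:main} to $B=k$, $X=Af$, $Y=eA$, and the three consequences are read off from the known description of $\mathbf{D}_{\rm sg}(k\oplus V)$ for a vector space $V$ with $V^2=0$, via the reference you cite (the paper uses the same \cite[Example 3.11]{Ch11}, plus \cite[Corollary 2.10(4)]{Smi11} for the full strength of the Hom-infiniteness claim in (3)). Your extra details --- verifying the hypotheses of Theorem \ref{thm:main} and the self-injectivity argument for $n=1$ --- are fine and just make explicit what the paper leaves implicit.
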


For the triangle equivalence in (2), we recall that any semisimple abelian category  has a canonical triangulated
structure such that the shift functor is the identity functor; see \cite[Lemma 3.4]{Ch11}.

\begin{proof}
The first statement follows directly from Theorem \ref{thm:main}. For the consequences, take a basis ${x_1, x_2,\cdots, x_n}$
for $eAf$. Then we have an isomorphism $k\oplus eAf \simeq B_n:=k\langle x_1, x_2, \cdots, x_n\rangle/{(x_1, x_2, \cdots, x_n)^2}$ of algebras. Here, $k\langle x_1, x_2, \cdots, x_n\rangle$ denotes the free algebra. The singularity category $\mathbf{D}_{\rm sg}(B_n)$ is completely described in \cite[Example 3.11]{Ch11}.

  The statement (1) follows from that fact that $B_n$ has finite global dimension if and only if $n=0$; (2) follows from the fact that $\mathbf{D}_{\rm sg}(B_n)$ is nontrivial and Hom-finite if and only if $n=1$; moreover, in this case, we have a triangle equivalence  $\mathbf{D}_{\rm sg}(B_n)\simeq k\mbox{-mod}$. This equivalence might also be deduced from \cite[Theorem 2.1]{Ric}. For the second half of (3), we refer to \cite[Corollary 2.10(4)]{Smi11}.
\end{proof}

We close the paper with a concrete example. The above corollary applies to determine  the singularity category of some non-Gorenstein algebras. Recall that a finite dimensional  algebra $A$ is \emph{Gorenstein} provided that the regular module $A$ has finite injective dimension on both sides. The singularity category of Gorenstein algebras is described in terms of maximal Cohen-Macaulay modules; see \cite{Buc} and \cite{Hap91}. However, not much seems to be known about the singularity category of  non-Gorenstein algebras.

\begin{exm}
{\rm Let $A$ be a finite dimensional algebra given by the following quiver with relations $\{\alpha\beta, \beta'\alpha\beta'\}$. We write the concatenation of paths from the right to the left.
\SelectTips{eu}{10}
\[\xymatrix@!=7pt{
\cdot_1 \ar@/^/[rr]|{\alpha} & &\ar@/^/[ll]|{\beta} \ar@/^1.5pc/[ll]|{\beta'}  \cdot_2 }\]
We observe that ${\rm dim}\; A=11$. Denote by $e_i$ the idempotent corresponding to the vertex $i$, $i=1,2$. Observe that
$1_A=e_1+e_2$.  Denote by $A'$ the subalgebra of $A$ generated by $e_1, e_2, \alpha$ and $\beta$. Then the algebra $A'$ has global dimension two; moreover, $e_2A'e_1=k\alpha$ is of dimension $1$. There is an obvious isomorphism $A\simeq A'\oplus (A'e_1\otimes e_2A')$ of algebras, which sends $\beta'$ to $e_1\otimes e_2$. By Corollary \ref{cor:idem}(2), we have a triangle equivalence
$$\mathbf{D}_{\rm sg}(A)\simeq k\mbox{-mod}.$$
We observe that the algebra $A$ is non-Gorenstein, since the injective hull $I_1$ of the simple module $S_1$ corresponding to the vertex $1$ has infinite projective dimension.}
\end{exm}

\bibliography{}

\vskip 10pt

 {\footnotesize \noindent Xiao-Wu Chen, Department of
Mathematics, University of Science and Technology of
China, Hefei 230026, Anhui, PR China \\
Wu Wen-Tsun Key Laboratory of Mathematics, USTC, Chinese Academy of Sciences, Hefei 230026, Anhui, PR China.\\
URL: http://mail.ustc.edu.cn/$^\sim$xwchen}

\end{document}